\begin{document}

\title{The normal map as a vector field}

\author{Thomas Waters$^*$ \and Matthew Cherrie}

\address{School of Mathematics and Physics, University of Portsmouth, England PO13HF}

\email{thomas.waters@port.ac.uk}

\begin{abstract}
In this paper we consider the normal map of a closed plane curve as a vector field on the cylinder. We interpret the critical points geometrically and study their Poincar\'{e} index, including the points at infinity. After projecting the vector field to the sphere we prove some counting theorems regarding the winding and rotation index of the curve and its evolute. We finish with a description of the extension to focal sets of surfaces.
\end{abstract}

\maketitle

\section{Introduction}

The envelope/caustic/focal set of the normal lines to a plane curve is known as the `evolute' and is a classical topic in Differential Geometry (see Figure 1 for an example). Evolutes have been studied by many authors over the years: most well known texts follow a traditional approach (\cite{docarmo},\cite{bergergostiaux},\cite{kuhnel}), some treat the topic in terms of contact and singularity (\cite{curvessing},\cite{porteous},\cite{izumiya}) and others take a more abstract approach (\cite{arnold}, \cite{tabachfuchs}, \cite{guilf}, \cite{evo1}). While known and studied as far back as Huygens \cite{yoder} (or even earlier \cite{appo}), one reason for interest in evolutes is as a Euclidean analogue for the conjugate locus (\cite{poincare}, \cite{myers}, \cite{TWclocconv}, \cite{hannes}); indeed Jacobi's original formulation of his last geometric statement, recently proved by Itoh and Kiyohara \cite{Itoh1}, was that the conjugate locus of a generic point on the triaxial ellipsoid has ``die gestalt der evolute der ellipse''\cite{jacobi}.

\begin{figure}
\begin{center}
{\includegraphics[width=0.48\textwidth]{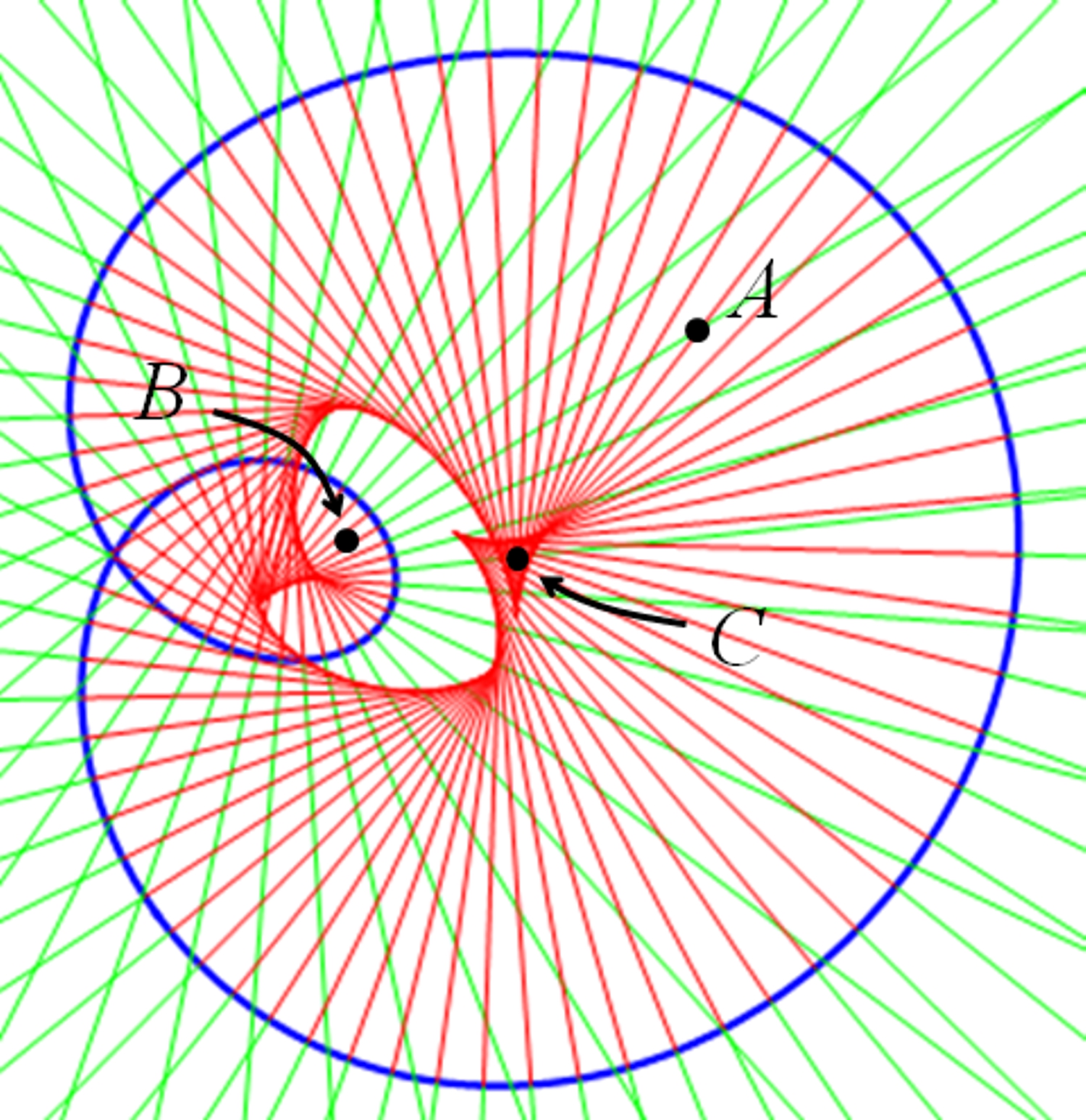}\hspace{0.25cm}\includegraphics[width=0.47\textwidth]{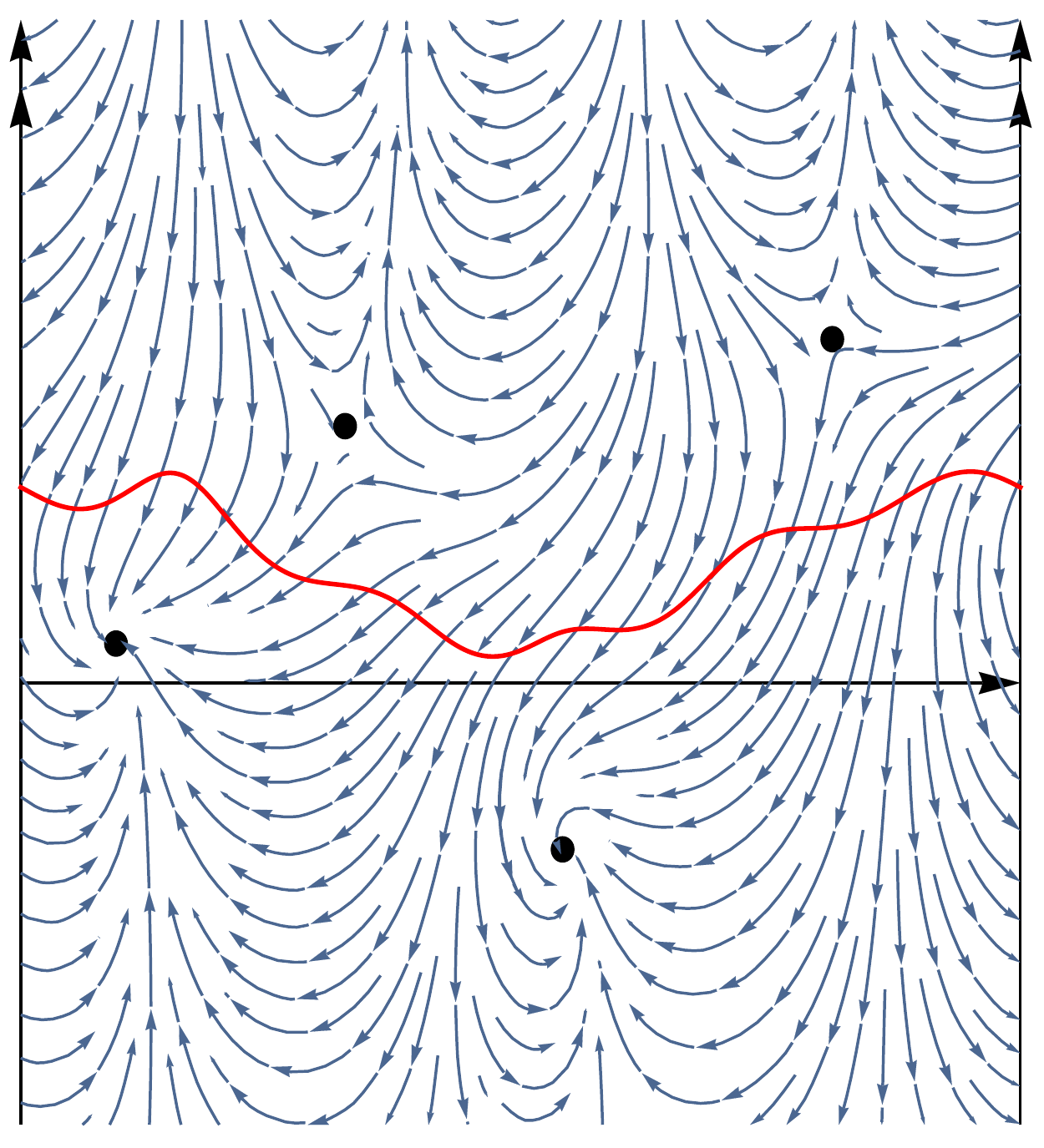}}\caption{On the left in blue is a plane curve of the type described at the beginning of Section 2, in red its evolute and the normal segments with $\rho<0$ and $0<\rho<1/k$ in green and red repsectively. On the right the normal map of the same curve (with origin at $A$) as a vector field; in red the curve $\rho=1/k$ and the dots are critical points (axes are $s,\rho$).}\label{normspic}
\end{center}
\end{figure}

This paper will take a novel approach: we view the normal map of a closed plane curve as a vector field, whose phase space is the cylinder. Critical points of this vector field have a Poincar\'{e} index with a clear geometrical meaning, and the index at infinity can be found in terms of the rotation index of the original curve. Then by projecting to the 2-sphere we prove certain counting results concerning the winding index of the plane curve and its evolute. Before we begin we will fix some terminology and notation:

The degree of a map is a central notion in Differential Topology \cite{degree}, and we are primarily interested in the degree of a map from $\rS^1$ to $\rS^1$. There are two ways to view this degree: the sum of orientation-defined signs attached to the preimages of a regular point (see for example \cite{milnor}), or the total change in the lifted angle made by the map in the sense of covering spaces (see for example \cite{docarmo} or \cite{lovett}); but intuitively if $f:\rS^1\to\rS^1$ then $deg(f)$ is the number of times the codomain is covered as we traverse the domain once, taking orientation into account. There are three main examples, all of which we will use in this work:

\begin{enumerate}
\item Let $\ba:\rS^1\to\rree$ be a closed, regular, smooth, positively oriented plane curve. Then the {\it winding index} of $\ba$ about the origin, denoted $w^\alpha_o$,  is $deg(\ba/|\ba|)$.
\item With $\ba$ as defined above, if $\dot{\ba}$ denotes the derivative w.r.t.\ some appropriate
parameter, then the {\it rotation index} of $\ba$, denoted $r^\alpha$, is $deg(\dot{\ba}/|\dot{\ba}|)$. 
\item If $\bv:\rree\to\rree$ is a smooth vector field with an isolated critical point at $p$, i.e.\ $\bv(p)=\bmt{0}$, let $\bv^*$ denote $\bv$ restricted to a smooth simple positively oriented closed curve that contains $p$ and no other critical point of $\bv$. Then the {\it Poincar\'{e} index} of $p$, denoted $i_p$, is $deg(\bv^*/|\bv^*|)$.
\end{enumerate} All of these definitions have various extensions which we may need to make use of in the text, in particular it is customary to consider the Poincar\'{e} index of a simple closed positively oriented curve in the domain of a vector field, which is equal to the sum of the Poincar\'{e} indices of the critical points contained by the curve \cite{jands}.

In the next section we will introduce the normal map of a plane curve with positive curvature, its connection with the evolute and its representation as a vector field on the cylinder. We describe the meaning and structure of the critical points of this vector field, and how they respond to moving the curve around in the plane. In Section 3 we project the vector field onto a sphere and prove certain counting theorems concerning the rotation and winding index of the plane curve and its evolute, in terms of quantities which are easily read off the vector field picture. In Section 4 we extend the discussion to the normal map of a surface in $\rre^3$.

\section{Finite critical points}

Suppose $\ba$ is a closed, regular, smooth plane curve, with $s$ as arc-length parameter. The exposition of what follows is clearest is we assume the curvature $k$ has one sign; we therefore orient $\ba$ such that $k>0$ if $(\bt,\bn)$ (the unit tangent and normal respectively) are right-handed. At any point of $\ba$, we may travel  a distance $\rho$ in the direction of $\bn$, to arrive at the point in the plane $\ba+\rho\bn$. This map, \[ F(s,\rho):\rS^1\times\rre\to\rre^2:(s,\rho)\to \ba(s)+\rho\bn(s) \] is known as the normal map. As well as providing a coordinate system adapted to the curve, the normal map is of interest since its singular set (the set of points where the map is not invertible) defines the evolute of $\ba$ (or rather its image under $F$ does). To see this we find the Jacobian of $F$: \begin{align} DF=\begin{vmatrix}
\dot{\ba}+\rho\dot{\bn} \\ \bn
\end{vmatrix}=\begin{vmatrix}
\bt+\rho(-k\bt) \\ \bn
\end{vmatrix}=1-\rho k \label{df}\end{align} and therefore the image of the singular set of this map is the curve $\bbet(s)=\ba+(1/k)\bn$; since $k>0$ then $\bbet$ is closed and bounded. See Figure \ref{normspic} on the left for an example.

The main idea of this paper is to view the normal map as a vector field whose domain is $\rS^1\times\rre$, the cylinder. An example is in Figure \ref{normspic} on the right. The first thing to notice is the existence of critical points (those with $\rho<\infty$ we will call `finite critical points'); geometrically a critical point at $(s^*,\rho^*)$ means the normal line to $\ba$ at $s^*$ passes through the origin, after travelling a distance $\rho^*$ in the direction of the positively oriented unit normal. The nature of the critical points can be classified as follows:

\begin{thm}
Suppose $\ba$ is in generic position, meaning its evolute does not pass through the origin. Then the finite critical points have Poincar\'{e} index $+1$ (which we shall call `centres') or $-1$ (which we shall call `saddles'); the saddles all lie above the line $\rho=1/k$, and the centres lie below it. There are as many saddles as there are centres (so the sum of the Poincar\'{e} indices of the finite critical points is zero), and there is at least one of each. 
\end{thm}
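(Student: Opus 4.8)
The plan is to reduce the whole statement to Morse theory of the smooth function $d\colon\rS^1\to\rre$, $d(s)=|\ba(s)|^{2}$, the squared distance of $\ba$ from the origin. First I would observe that, since $\dot d(s)=2\,\ba(s)\cdot\bt(s)$, the critical points of $d$ are exactly the parameters $s^{*}$ at which $\ba(s^{*})$ is orthogonal to $\bt(s^{*})$, i.e.\ at which $\ba(s^{*})=-\rho^{*}\bn(s^{*})$ for a unique scalar $\rho^{*}$; equivalently, at which the normal line to $\ba$ at $s^{*}$ meets the origin, $\rho^{*}$ being precisely the signed distance travelled along $\bn$ to reach it. Hence $(s^{*},\rho^{*})\mapsto s^{*}$ is a bijection from the finite critical points of the vector field $F(s,\rho)=\ba(s)+\rho\bn(s)$ onto the critical points of $d$, the $\rho$-coordinate of a critical point being determined by its $s$-coordinate.

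Next I would compute the relevant second-order data. Viewing $F$ as a map from the cylinder with coordinates $(s,\rho)$ to $\rree$, its Jacobian has rows $\partial_{s}F=(1-\rho k)\bt$ and $\partial_{\rho}F=\bn$, so its determinant is exactly $DF=1-\rho k$ of \eqref{df}, and the Poincar\'e index of a nondegenerate zero is the sign of this determinant. On the other hand, differentiating $\dot d$ and using $\dot\bt=k\bn$ gives $\ddot d(s)=2\bigl(1+k(s)\,\ba(s)\cdot\bn(s)\bigr)$, which at a critical point (where $\ba\cdot\bn=-\rho^{*}$) equals $2\bigl(1-\rho^{*}k(s^{*})\bigr)$. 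Now ``generic position''---that no point of the evolute $\bbet=\ba+(1/k)\bn$ is the origin---says exactly that no critical point has $\rho^{*}=1/k(s^{*})$, i.e.\ that $1-\rho^{*}k(s^{*})\neq0$ at every critical point; equivalently, $d$ is a Morse function. Therefore each finite critical point is nondegenerate with Poincar\'e index $\operatorname{sign}\bigl(1-\rho^{*}k(s^{*})\bigr)\in\{+1,-1\}$: those lying below the curve $\rho=1/k$ have index $+1$ and are nondegenerate minima of $d$ (the centres), and those lying above it have index $-1$ and are nondegenerate maxima of $d$ (the saddles).

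It then remains only to count, and for this I would invoke the standard fact that a Morse function on the compact manifold $\rS^1$ has a positive, even number of critical points, alternating between minima and maxima around the circle. Consequently there are equally many minima as maxima, hence equally many centres as saddles---so the sum of the Poincar\'e indices of the finite critical points is $0$---and at least one of each. (As an independent check, the vanishing of the total index also follows from Poincar\'e--Hopf on the truncated cylinder $\rS^1\times[-R,R]$ with $R>\max_{s}|\ba(s)|$: there $F/|F|$ tends to $\pm\bn$ on $\rho=\pm R$, and the two ends, carrying opposite induced boundary orientations, contribute $r^{\alpha}$ and $-r^{\alpha}$.)

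The one point that needs care is the bookkeeping around genericity and orientation: one must check that ``the evolute passes through the origin'' is exactly the degenerate case $\ddot d(s^{*})=0$---so that generic position really is equivalent to $d$ being Morse, a condition which in particular also excludes the circle centred at the origin, on which $d$ is constant and not Morse---and that, with the cylinder oriented by $(s,\rho)$, the Poincar\'e index of a nondegenerate critical point is indeed $\operatorname{sign}DF=\operatorname{sign}\ddot d(s^{*})$ and not its negative. Everything else is a routine Frenet computation together with textbook Morse theory on $\rS^1$.
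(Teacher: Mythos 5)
Your proposal is correct and follows essentially the same route as the paper: identify the finite critical points of $F$ with the stationary points of the squared-distance function, compute that both the sign of $DF=1-\rho k$ and the second derivative of $D^2$ at a critical point are governed by $1-\rho^*k(s^*)$, and then count via elementary Morse theory on $\rS^1$ (your alternation argument and the paper's appeal to $\chi(\rS^1)=0$ are the same fact). The only additions are cosmetic: you make the genericity-equals-Morse equivalence and the Poincar\'{e}--Hopf cross-check explicit, which the paper leaves implicit.
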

\begin{proof}
If the evolute does not pass through the origin then the critical points are non-degenerate and their Poincar\'{e} index is, from \eqref{df}, the sign of $1-\rho k$ (see \cite{milnor}); therefore saddles have $\rho>1/k$ and centres $\rho<1/k$. To distinguish saddles and centres geometrically we recall the distance squared function $D^2:\rS^1\to\rre^+:s\to\ba.\ba$ which is stationary when $\ba.\dot{\ba}=0$, i.e. when the position vector at $s=s^*$ is perpendicular to the tangent line. Hence the normal to $\ba$ at $s^*$ passes through the origin, in fact $\ba(s^*)=-\rho^*\bn(s^*)$. Thus critical points of the vector field correspond to stationary points of the $D^2$ function. To classify the stationary points we note \[ \frac{d^2 (D^2)}{ds^2}=\dot{\ba}.\dot{\ba}+\ba.\ddot{\ba}=1-\rho k.\] Therefore critical points with Poincar\'{e} index $-1$ (saddles) correspond to maxima of $D^2$, and those with Poincar\'{e} index $+1$ (centres) correspond to minima. Since $D^2$ is defined on a circle there must be at least one maximum and one minimum,  and since $\chi(\rS^1)=0$ there must be as many saddles as centres.
\end{proof}

An immediate advantage to the vector field approach is that we can tell at a glance from the right of Figure \ref{normspic} how many normals to $\ba$ pass through the origin; this is a locally constant function on $\rre^2/\bbet$. We can also quickly see what point on $\ba$ is closest to the origin, and how far it is. Suppose we were interested in some point other than the origin, say $p$? We can either move our coordinate system or move the curve so the origin is at $p$. As we do, we may find the origin crosses the evolute and then the number of normals through the origin will change by $\pm 2$; this is manifested in the vector field by the creation or annihilation of a saddle-centre pair of critical points. 

To sketch the details we suppose there is a critical point on the curve $\rho=1/k$, and then we perturb it. The exposition is clearest if we suppose we translate $\ba$ in such as way that the origin passes though the evolute perpendicularly, i.e.\ we let $\ba\to\ba+\epsilon\bt$. Then, to second order in $\delta s,\delta \rho$, the normal map looks like \[ F(s^*+\delta s,\rho^*+\delta\rho)=\bn[\delta\rho+O(3)]+\bt[\epsilon-\tfrac{1}{2}\rho\dot{k}\delta s^2-k\delta s\delta\rho+O(3)] \] (we have dropped the $^*$ on the RHS). Where are the critical points? Since $\bt$ and $\bn$ are orthonormal we require $\delta\rho=0$ which means $\delta s=\pm\sqrt{2\epsilon/\rho\dot{k}}$. Therefore if $2\epsilon/\rho\dot{k}>0$ there will be two critical points, a saddle and a centre on either side of the curve $\rho=1/k$; otherwise none.

The following corollary is now as easy to prove as it is to state: suppose $p$ is some point in the plane not on $\ba$ or $\bbet$; emanating from $p$ are an even number of line segments that reach $\ba$ normally; precisely half of these line segments are tangential to $\bbet$ on the way to meeting $\ba$, the other half reach $\ba$ first. As proof we note that, after shifting the origin to $p$, there are as many saddles as there are centres and the saddles all have $\rho>1/k$, which means the normals to $\ba$ pass through their focal point before reaching $p$; the centres have $\rho<1/k$ and so the normals reach $p$ first.

\section{Global results}

To understand the global properties of the vector field we need to look at infinity. The cylinder has two points at infinity, both $\rho\to +\infty$ and $\rho\to -\infty$. A two-point compactification of the cylinder could be the Mercator projection: we orient the cylinder in $\rre^3$ so $\bsh\times\brh$ is outward pointing, and consider the sphere tangential to the cylinder at $\rho=0$. A point on the cylinder $Q$ is mapped to the point on the sphere $Q'$ which is the intersection of the line segment connecting $Q$ to $O$ with the sphere. In what follows it may also be necessary to further vertically project a hemisphere of the sphere onto its equatorial plane. To visualize the vector field on the sphere we may find the integral curves on the cylinder and plot their projections on the sphere; an example is in Figure \ref{icsphpic}. We can see clearly the saddles and centres as before (the Poincar\'{e} index is preserved under the projection), but the points of interest are now the north and south poles, $N$ and $S$, as these correspond to the points at infinity on the cylinder. Clearly these points are not saddles or centres; we will derive their Poincar\'{e} index shortly. First we prove the following lemma as it will be useful to make rigorous the rest of this section.

\begin{figure}
\begin{center}
\includegraphics[width=0.8\textwidth]{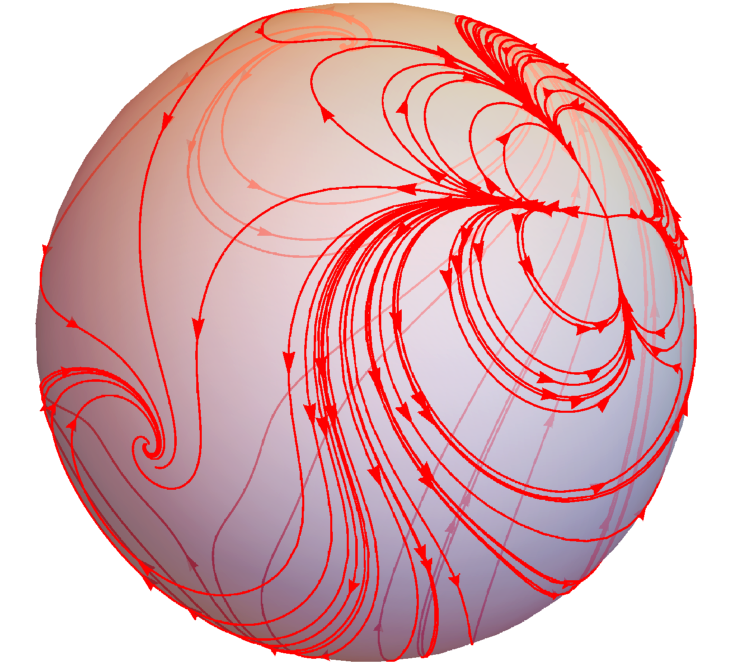}\caption{The normal map vector field from Figure \ref{normspic} projected onto the unit sphere. The north and south poles have Poincar\'{e} index 3 and -1 respectively, as predicted in Theorem 2.}\label{icsphpic}
\end{center}
\end{figure}

\begin{lem}
Let $\bu,\bv,\bw$ be unit tangent vector fields on the cylinder, and let $\bg$ be a simple closed positively oriented but not contractible curve on the $\rho>0$ portion of the cylinder. We will denote by $deg(\bu,\bv;\bg)$ the total anticlockwise rotation of $\bu$ with respect to $\bv$ as we perform one circuit of $\bg$. Let $(\phantom{a})'$ denote an object after projection from the cylinder to the sphere and then to the equatorial plane. Then \begin{enumerate}
\item $deg(\bu,\bv;\bg)=deg(\bu,\bw;\bg)+deg(\bw,\bv;\bg)$,
\item $deg(\bu',\bv';\bg')=deg(\bu,\bv;\bg)$.
\end{enumerate}
\end{lem}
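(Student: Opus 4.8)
The plan is to treat the two claims separately, since they are of quite different character. Part (1) is essentially the cocycle property of "relative winding number" and should follow from the additivity of degree under pointwise multiplication in the circle group. First I would observe that at each point of $\bg$ the three unit vectors $\bu,\bv,\bw$ determine angles, and $\bu$ relative to $\bv$ differs from ($\bu$ relative to $\bw$) plus ($\bw$ relative to $\bv$) by a quantity that is continuous and $2\pi$-valued, hence constant along the connected curve $\bg$; integrating (or tracking the total change) over one circuit of $\bg$ then gives the stated identity exactly, with no error term. Concretely, writing $\theta_{\bu\bv}$ for a continuous choice of the angle from $\bv$ to $\bu$ along a lift, one has $\theta_{\bu\bv} = \theta_{\bu\bw} + \theta_{\bw\bv} + 2\pi n$ with $n$ locally constant, and $deg(\cdot,\cdot;\bg)$ is by definition the net change of the corresponding angle over the loop; the constant $2\pi n$ contributes nothing to a net change. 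This part I expect to be routine.

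Part (2) is the real content: projecting the cylinder to the sphere (Mercator) and then a hemisphere to the equatorial plane is a diffeomorphism of the relevant open region, but it is \emph{not} conformal, so one cannot simply say "angles are preserved." The key point is instead that $deg(\bu',\bv';\bg')$ does not actually depend on the metric or on angles at all: it is the degree of the map $\bg' \to \rS^1$ obtained by measuring the direction of $\bu'$ relative to $\bv'$ in \emph{any} fixed trivialization of the tangent bundle over a neighbourhood of $\bg'$, and degree is a homotopy/topological invariant insensitive to the choice of trivialization or metric. So the strategy is: (i) note that the composite projection $\Phi$ from the $\rho>0$ part of the cylinder to the (open) equatorial disc minus the image of the pole is an orientation-preserving diffeomorphism onto its image; (ii) under an orientation-preserving diffeomorphism $\Phi$, the pushed-forward vector fields $\bu' = \Phi_*\bu$, $\bv' = \Phi_*\bv$ (rescaled to unit length, which is a homotopy through nonvanishing fields and so irrelevant to degree) have the property that the "relative angle" map $\bg'\to\rS^1$ is homotopic to the original one $\bg\to\rS^1$ — indeed it is literally the same $\rS^1$-valued function read through the trivialization transported by $\Phi$; (iii) conclude the degrees agree. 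The cleanest way to make (ii) rigorous is to pick a trivialization of $T(\text{cylinder})$ over a collar of $\bg$ — e.g. the frame $(\bsh,\brh)$ — push it forward by $\Phi$ to get a (non-orthonormal, but that's fine) frame over a collar of $\bg'$, express both $deg(\bu,\bv;\bg)$ and $deg(\bu',\bv';\bg')$ as $deg$ of the $\rS^1$-valued map "argument of $\bu$ in frame $F$ minus argument of $\bv$ in frame $F$," and observe these two maps are equal after the identification of $\bg$ with $\bg'$.

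The main obstacle, and the place I would be most careful, is precisely the non-conformality: $deg(\bu,\bv;\bg)$ as literally defined uses the Euclidean angle on the cylinder, and $deg(\bu',\bv';\bg')$ uses the Euclidean angle on the plane, and these two angle functions genuinely differ pointwise. One must argue that the \emph{net rotation over a closed loop} is unchanged even though the pointwise angles are not — i.e. that the discrepancy is the exact differential of a single-valued function (the logarithm of the local conformal distortion factor composed with a rotation), which integrates to zero around $\bg'$. Equivalently, and this is the formulation I would actually write up: the Euclidean-angle "relative rotation" equals the trivialization-based degree for \emph{any} continuous frame, because changing frame changes both arguments by the same continuous quantity; since $\Phi$ carries one frame to another, the two degrees coincide. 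I would also remark that $\bg$ being non-contractible in the cylinder is what guarantees $\bg'$ encircles the pole's image, but this plays no role in the proof of the identity itself — it is only there so the statement is non-vacuous and matches the intended applications.
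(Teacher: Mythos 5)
Your proposal is correct. Part (1) coincides with the paper's treatment (the paper simply declares it ``clear''; your cocycle argument via a continuous lift of the relative angle is the standard justification). For part (2) you take a genuinely different route. The paper's proof starts from the same observation you flag as the crux --- the projections do not preserve angle but do preserve parallelism and orientation --- and then computes the relative degree as a signed count of coincidences: $\bu'=\bv'$ exactly where $\bu=\bv$, each crossing occurs in the same sense, hence the net number of anticlockwise rotations of $\bu'$ about $\bv'$ equals that of $\bu$ about $\bv$. Your argument instead expresses $deg(\bu,\bv;\bg)$ as the degree of the relative-argument map in an arbitrary trivialization over a collar of $\bg$, notes that changing trivialization by a loop in $GL^{+}(2,\mathbb{R})$ shifts the windings of $\bu$ and of $\bv$ by the same integer and so leaves their difference invariant, and observes that the composite projection transports one admissible frame to another. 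What your approach buys: it needs no transversality or finiteness of the coincidence set $\{\bu=\bv\}$ (which the paper's signed-count argument tacitly assumes), and it isolates exactly why non-conformality is harmless; what the paper's buys is brevity and a directly visual statement. One small caution: your parenthetical claim that the pointwise angle discrepancy is ``the exact differential of the logarithm of the local conformal distortion factor'' is not quite right --- the projection is not conformal, so there is no single distortion factor, and the discrepancy is the circle-valued function coming from the rotational part of the polar decomposition of the differential --- but you explicitly discard this in favour of the frame-invariance formulation, so nothing is lost. Your closing remark that the non-contractibility of $\bg$ plays no role in the identity itself, only in the intended application, is also accurate.
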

\begin{proof}
The first statement is clear but worth stating for what follows. For the second statement, we note that the projections described above do not preserve angle but they do preserve parallelism and orientation. Therefore $\bu'=\bv'$ if and only if $\bu=\bv$ and $\bu'$ will cross $\bv'$ in the same sense that $\bu$ crosses $\bv$. Therefore in traversing $\bg'$ we see $\bu'$ will have completed as many anticlockwise rotations about $\bv'$ as $\bu$ will complete about $\bv$ as we traverse $\bg$.
\end{proof}

\begin{thm}
If $\ba$ is a closed plane curve as described in Section 2 with rotation index $r^\alpha$, then the normal map associated with $\ba$ as a vector field projected onto the sphere has Poincar\'{e} indices \begin{align} i_N=1+r^\alpha,\qquad i_S=1-r^\alpha. \label{iN} \end{align}
\end{thm}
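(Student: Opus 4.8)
The plan is to compute the Poincaré indices $i_N$ and $i_S$ by evaluating the rotation of the normal map vector field $F$ along a small positively oriented loop encircling each pole on the sphere. By the projection invariance in part (2) of the Lemma, it suffices to work on the cylinder: a small loop around $N$ corresponds to a large-$\rho$ circle $\bg_+$ of the form $\rho = R \gg 0$ (traversed in the induced positive orientation), and a small loop around $S$ corresponds to a large-$\rho$ circle $\bg_-$ with $\rho = -R$; I must be careful that the orientation of $\bg_-$ as a loop around $S$ is the \emph{reverse} of its orientation as a $\rho = \text{const}$ curve on the cylinder, which is what produces the sign flip between the two formulas. On such a circle, $F(s,\rho) = \ba(s) + \rho\bn(s)$, and for $|\rho|$ large the $\rho\bn(s)$ term dominates, so $F/|F|$ is a small perturbation of $\pm\bn(s)$; hence $deg(F/|F|;\bg_\pm)$ equals $\pm deg(\bn;\bg_\pm)$ up to the orientation bookkeeping.

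Next I would use part (1) of the Lemma to split the rotation of $F$ relative to a fixed reference direction (say the constant vector field $\bsh$ pointing along the cylinder axis, whose projection to the plane is the standard radial/horizontal framing) into the rotation of $F$ relative to $\bn$ plus the rotation of $\bn$ relative to $\bsh$. The first piece: along $\bg_+$ (large positive $\rho$), $F \approx \rho\bn$ points essentially along $+\bn$ and its rotation relative to $\bn$ is $0$ (to leading order, and since there are no critical points out there the total is exactly $0$); along $\bg_-$ (large negative $\rho$), $F \approx \rho\bn$ points along $-\bn$, which still rotates at the same rate as $\bn$, contributing $0$ relative to $\bn$ as well. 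The second piece: as $s$ traverses $\rS^1$ once, the unit normal $\bn$ rotates by exactly $r^\alpha$ full turns, by definition of the rotation index (since $\bn$ is $\dot\ba$ rotated by a fixed angle $\pi/2$). Finally there is the contribution of the reference framing $\bsh$ itself under the projection to the sphere: a constant vector field on the cylinder, when pushed to the punctured plane via the Mercator-then-vertical projection, winds once around the puncture (this is the ``$+1$'' and is exactly the index of the pole of the ``trivial'' vector field, i.e. the Euler-characteristic contribution), and with the opposite orientation at $S$ it contributes $+1$ again but the $r^\alpha$ term flips sign. Assembling: $i_N = 1 + r^\alpha$ and $i_S = 1 - r^\alpha$; as a consistency check, $i_N + i_S = 2 = \chi(\rS^2)$ since the saddles and centres sum to zero by Theorem 1.

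The main obstacle I anticipate is the orientation/sign accounting at the two poles: one must pin down precisely how the positive orientation of a small loop around $N$ versus around $S$ pulls back to orientations of $\rho = \pm R$ circles on the cylinder under the specified Mercator projection (recall the cylinder was oriented so $\bsh \times \brh$ is outward pointing), and how the ``$1$'' arising from the reference framing combines with the $\pm r^\alpha$. A clean way to handle this rigorously is to fix the reference field $\bsh$ and write, on $\bg_\pm$, $deg(F', \bsh'; \bg_\pm') = deg(F', \bn'; \bg_\pm') + deg(\bn', \bsh'; \bg_\pm')$ via Lemma part (1), invoke part (2) to drop all the primes, evaluate each term on the cylinder where the geometry is transparent, and then separately compute the index of $\bsh'$ at each pole by a direct local picture. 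This reduces the whole argument to three elementary degree computations plus one careful orientation check, rather than any hard analysis.
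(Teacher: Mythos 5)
Your proposal follows essentially the same route as the paper's proof: both reduce the index at each pole to a degree computation on a large-$|\rho|$ circle of the cylinder, invoke the Lemma to transfer the count through the projection and to split $deg(F,\bk)$ into the rotation of $F$ relative to the normal/axial framing (which tends to $r^\alpha$ since $F/|F|\to\pm\bn$) plus the $+1$ contributed by the reference direction $\bsh'$ winding once about the pole. The only cosmetic differences are that you compute $i_S$ by a direct orientation-reversal argument where the paper instead appeals to Poincar\'{e}--Hopf on $\rS^2$, and you insert the extra (harmless, slightly more careful) splitting $deg(F,\bsh)=deg(F,\bn)+deg(\bn,\bsh)$ rather than passing to the limit in $deg(\bu,\bsh;\bg)$ directly.
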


\begin{proof}
Let $\bg$ be the curve $\rho=\rho_0$ on the cylinder, and let $\bu$ be the normalized $F(s,\rho_0)$. As $\rho\to+\infty$, $\bg'$ is a small positively oriented circle enclosing $O$, the vertical projection of $N$ onto the equatorial plane. If $\bk$ is some fixed direction in this plane, then $i_N=deg(\bu',\bk;\bg')$ but by the previous lemma \begin{align*} deg(\bu',\bk;\bg')&=deg(\bu',\bsh';\bg')+deg(\bsh',\bk;\bg') \\ &=deg(\bu,\bsh;\bg)+1. \end{align*} Since \[ \lim_{\rho\to\infty}\bu=\lim_{\rho\to\infty}\frac{F(s,\rho_0)}{|F(s,\rho_0)|}=\bn \] then $deg(\bu,\bsh;\bg)\to deg(\bn,\bsh;\bg)=r^\alpha$ and hence $i_N=1+r^\alpha$. For $i_S$ we can repeat the construction for $\rho\to -\infty$, or note that the sum of the Poincar\'{e} indices of all the critical points on the sphere must sum to 2, whereas from Theorem 1 the sum of the Poincar\'{e} indices at the finite critical points is zero.
\end{proof}

We should note for the next section that there are two other ways of proving this theorem: if $\ba$ does indeed have rotation index $r^\alpha$, then $\bn$ is regular homotopic to $(\cos(2\pi r^\alpha s/l),\sin(2\pi r^\alpha s/l))$ where $l$ is the length of $\ba$. Now we can either explicitly project this vector field onto the sphere and then the plane and calculate $i_N$ directly, or we can use the methods of Firby and Gardiner \cite{firby} and note that each anticlockwise rotation of $\bn$ on the cylinder generates an elliptic sector in the plane. We have instead used the approach of the lemma as it leads nicely to the following theorems. First we define a `normal segment' to be the normal to $\ba(s)$ with $-\infty<\rho<1/k(s)$.

\begin{thm}
Let $\ba$ be a closed plane curve as before, and $\bbet$ be its evolute. Let $n_p$ be the number of normal segments to $\ba$ through a point in the plane $p$ (assuming $p$ is not on $\ba$ or $\bbet$). Then \begin{align} w^\beta_p=r^\alpha -n_p. \label{wbp} \end{align}
\end{thm}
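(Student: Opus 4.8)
The plan is to compute the winding index $w^\beta_p$ by the covering-space (lifted-angle) definition and to relate it, term by term, to contributions visible in the normal-map vector field on the cylinder. First I would shift the origin to $p$, so that $w^\beta_p = \deg(\bbet/|\bbet|)$ with $\bbet = \ba + (1/k)\bn$. The key observation is that the direction of $\bbet$ from the origin can be tracked by following the normal line of $\ba$ at each $s$: the point $\bbet(s)$ lies on that normal line (at $\rho = 1/k(s)$), so $\bbet(s)/|\bbet(s)|$ is $\pm\bn(s)$ depending on the sign of $\rho$ at which the evolute point sits relative to where the normal line crosses the origin. More precisely, the normal line at $s$ meets the origin exactly at the finite critical points of the vector field, i.e.\ when $\ba(s) = -\rho^*\bn(s)$; away from those $s$-values the evolute point $\bbet(s)$ is either on the same ray as $\bn(s)$ or the opposite ray, and it switches precisely when $s$ passes a critical point and the critical point's $\rho$-coordinate crosses $1/k$.

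Next I would set up the bookkeeping. Parametrise $\ba$ (hence $\bbet$) once around. Write the lifted angle of $\bbet/|\bbet|$ as the lifted angle of $\pm\bn$ plus a correction that only changes when $s$ passes an $s$-value $s^*$ where the normal line through $s^*$ hits the origin. The total change in the lifted angle of $\bn$ over one circuit is $2\pi r^\alpha$ by definition of the rotation index. The sign flips between $\bn$ and $-\bn$ contribute a jump of $\pm\pi$ to the lifted angle each time; I would argue these jumps come in the right pattern so their net contribution over a full circuit is $-2\pi n_p$. The geometric content is exactly the dichotomy from the corollary after Theorem 1: of the normal segments through $p$, those corresponding to centres (with $\rho < 1/k$) versus those corresponding to saddles (with $\rho > 1/k$) correspond to the two types of sign configuration of $\bbet(s)$ relative to $\bn(s)$, and a normal segment through $p$ (by definition $-\infty < \rho < 1/k$) is precisely a crossing that contributes a $-1$ to the winding count of $\bbet$. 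Summing: $2\pi w^\beta_p = 2\pi r^\alpha - 2\pi n_p$.

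An alternative, perhaps cleaner, route I would also try is to use the lemma and the projected vector field directly. Consider the vector field $\bv(s) = \bbet(s)$ (evolute position vector from $p$) versus $\bn(s)$ along the curve $\bg: \rho = \rho_0$ for $\rho_0$ just below where the evolute sits; then $\deg(\bbet/|\bbet|) = \deg(\bbet, \bk; \bg)$ for a fixed direction $\bk$, and by part (1) of the lemma this splits as $\deg(\bbet, \bn; \bg) + \deg(\bn, \bk; \bg) = \deg(\bbet, \bn; \bg) + r^\alpha$. The term $\deg(\bbet, \bn; \bg)$ counts the net number of times $\bbet$ rotates relative to $\bn$, which happens only at the $s$-values where $\bbet(s)$ passes through (the ray of) $\bn(s)$ extended backwards through the origin — i.e.\ at normal segments through $p$ — each contributing $-1$. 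This gives $\deg(\bbet, \bn; \bg) = -n_p$ and hence \eqref{wbp}.

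The main obstacle I anticipate is making the sign of each crossing contribution rigorous: showing that every normal segment through $p$ contributes exactly $-1$ (and nothing else does) to $\deg(\bbet,\bn;\bg)$, and in particular that the orientation conventions ($k>0$, positively oriented $\ba$, the direction of increasing $\rho$) line up so the sign is $-n_p$ rather than $+n_p$. I would pin this down by examining a single crossing in local coordinates: near a critical point, using the second-order expansion of $F$ already computed before Theorem~2 (or simply $\bbet - \ba = (1/k)\bn$ together with $\dot\bbet \parallel \bn$), one checks the handedness with which $\bbet$ sweeps past the backward ray of $\bn$, and confirms it is clockwise precisely because the evolute point lies beyond the origin along the normal (that is, because $\rho < 1/k$ at a normal segment). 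The rest — that $\bn$ contributes $r^\alpha$ and that no contributions are missed — follows from the definitions and the non-degeneracy hypothesis $p \notin \ba \cup \bbet$.
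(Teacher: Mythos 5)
Your second route is correct and genuinely different from the paper's. The paper starts the same way, with $w^\beta_p=deg(\bu,\bsh;\bg)$ for $\bg$ the curve $\rho=1/k$ and $\bu$ the normalized evolute, but it evaluates this by projecting to the sphere: $\bg'$ encloses the $n_p$ saddles and the north pole, so by Theorems 1 and 2 and the index theorem its Poincar\'{e} index is $n_p(-1)+(1+r^\alpha)$, and Lemma 1 converts that back into $w^\beta_p+1$. You instead split off $deg(\bn,\bk;\bg)=r^\alpha$ and compute $deg(\bbet,\bn;\bg)=-n_p$ directly as a signed count of preimages of a regular value of the relative angle. This buys independence from Theorem 2 and from the spherical compactification entirely, at the price of the local sign check you rightly identify as the crux; that check does close. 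Writing $\ba=a_t\bt+a_n\bn$, one has $\bbet=a_t\bt+(a_n+1/k)\bn$, so the angle $\theta$ from $\bn$ to $\bbet$ vanishes exactly when $a_t=0$ and $a_n+1/k>0$, i.e.\ at the centres, i.e.\ at the normal segments through $p$; there $\dot{\theta}=-\dot{a}_t/(a_n+1/k)=-(1-k\rho^*)\cdot k/(1-k\rho^*)=-k<0$, so each crossing contributes $-1$ and $deg(\bbet,\bn;\bg)=-n_p$ as you claim.

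Two corrections, neither fatal to the second route. First, your opening paragraph is wrong as written: $\bbet(s)/|\bbet(s)|$ is \emph{not} $\pm\bn(s)$ away from the critical $s$-values --- it is a multiple of $\bn(s)$ precisely \emph{at} those values --- and the lifted angle of a continuous map has no $\pm\pi$ jumps; that version should be discarded in favour of the second. Second, ``the backward ray of $\bn$'' is the wrong regular value to pair with normal segments: $\bbet(s^*)=(1/k-\rho^*)\bn(s^*)$ lies on the backward ray at the saddles and on the forward ray at the centres. Because saddles and centres are equinumerous (Theorem 1) either choice of regular value yields $-n_p$, but only the forward ray makes the statement ``each normal segment contributes $-1$'' literally true.
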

\begin{proof} A translation of coordinates puts the origin at $p$. Let $\bg$ be the curve $\rho=1/k$ on the cylinder, and let $\bu$ be the normal map restricted to this curve and normalized. Then $deg(\bu,\bsh;\bg)=w^\beta_p$, but also if $\bv$ is the tangent vector to $\bg$ then \[ w^\beta_p=deg(\bu,\bsh;\bg)=deg(\bu,\bv;\bg)+deg(\bv,\bsh;\bg)=deg(\bu,\bv;\bg). \] Now projecting to the sphere and hence to the plane, $\bg'$ is a simple closed positively oriented  curve which contains $n_p$ saddle points and the projection of $N$; therefore from Theorems 1 and 2 its Poincar\'{e} index is \[ n_p(-1)+(1+r^\alpha). \] But this is the total rotation of $\bu'$ with respect to $\bk$ as we traverse $\bg'$, and \begin{align*} deg(\bu',\bk;\bg')&=deg(\bu',\bv';\bg')+deg(\bv',\bk;\bg') \\ &=deg(\bu,\bv;\bg)+1 \end{align*} which means \[ n_p(-1)+(1+r^\alpha)=w^\beta_p+1 \] and the result follows.
\end{proof}

We can rephrase the last result in terms of simply the number of normals to $\ba$ through $p$. If we denote this by $N_p$ then (since there are as many saddles as there are centres) $N_p=2n_p$ and \begin{align} w^\beta_p=r^\alpha -N_p/2, \label{wpbN} \end{align} which extends the result of \cite{khim} to non-simple curves.

Now suppose we instead consider separately the portions of the normal segments with $\rho<0$ and $0<\rho<1/k$. Then we have:

\begin{thm}
With the definitions as before, \begin{align} w^\alpha_p=r^\alpha-m_p \label{wap} \end{align} where $m_p$ is the number of normal segments with $\rho<0$ through $p$.
\end{thm}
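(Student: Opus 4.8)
The plan is to mimic the proof of Theorem 3 almost verbatim, but with the curve $\bg$ on the cylinder chosen to be $\rho=0$ instead of $\rho=1/k$. After a translation of coordinates putting the origin at $p$, let $\bu$ be the normal map restricted to $\rho=0$ and normalized; but $F(s,0)=\ba(s)$, so $\bu=\ba/|\ba|$, and hence $deg(\bu,\bsh;\bg)=w^\alpha_p$ by the very definition of the winding index. As before, writing $\bv$ for the unit tangent to $\bg$, part (1) of the Lemma gives $deg(\bu,\bsh;\bg)=deg(\bu,\bv;\bg)+deg(\bv,\bsh;\bg)$; the circle $\rho=0$ has $deg(\bv,\bsh;\bg)=1$ once projected (it encloses $O$ positively), but I must be slightly careful here: I actually want to match the bookkeeping of Theorem 3, where the corresponding term was absorbed because $\bg$ there was $\rho=1/k$ and $deg(\bv,\bsh;\bg)$ likewise equalled $1$ after projection. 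So the cleanest route is to carry $deg(\bu,\bv;\bg)$ through unchanged.

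Next I would project to the sphere and then to the equatorial plane. The curve $\bg'$ is a simple closed positively oriented curve in the plane which now encloses the projection of the north pole $N$ together with \emph{all} the critical points lying on the $\rho>0$ side of the cylinder that sit below $\rho=1/k$ — but by Theorem 1 every centre lies below $\rho=1/k$ and every saddle lies above it, so the critical points between $\rho=0$ and $\rho=1/k$ are exactly the centres whose normals reach $p$ after crossing $\rho=0$; equivalently, the critical points enclosed by $\bg'$ on the finite part are precisely the centres with $\rho>0$. Meanwhile the normal segments with $\rho<0$ through $p$ correspond to centres with $\rho<0$, which are \emph{not} enclosed. Since there are as many saddles as centres and $N_p=2n_p$, the number of centres with $\rho>0$ equals $n_p - (\text{number with }\rho<0)$... — here I need the clean identity: if $m_p$ is the number of normal segments with $\rho<0$ through $p$, then the number of centres with $\rho>0$ is $n_p-m_p$ is wrong; rather $m_p$ counts centres with $\rho<0$ and the centres with $0<\rho<1/k$ number $n_p-m_p$ only if every $\rho<1/k$ normal is either $\rho<0$ or $0<\rho<1/k$, which is exactly the definition of a normal segment. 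So $\bg'$ encloses $n_p-m_p$ centres (index $+1$ each) and $N$ (index $1+r^\alpha$), giving Poincar\'e index $(n_p-m_p)(+1)+(1+r^\alpha)$.

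Then, exactly as in Theorem 3, this Poincar\'e index equals the total rotation of $\bu'$ about a fixed direction $\bk$, and $deg(\bu',\bk;\bg')=deg(\bu',\bv';\bg')+deg(\bv',\bk;\bg')=deg(\bu,\bv;\bg)+1=w^\alpha_p+1$. Equating,
\begin{align*}
(n_p-m_p)+(1+r^\alpha)=w^\alpha_p+1,
\end{align*}
so $w^\alpha_p=r^\alpha+n_p-m_p$. This is not yet \eqref{wap}; to finish I need $n_p-m_p=-m_p$, i.e.\ I have miscounted which centres are enclosed. The resolution is that $\bg'$ on the sphere, when projected to the equatorial plane, wraps the \emph{outside} region for $\rho>0$: going from the cylinder through the Mercator map, the circle $\rho=\rho_0>0$ maps to a circle on the sphere in the northern hemisphere, and projecting \emph{that} hemisphere down gives a circle about $O=N'$ whose interior corresponds to $\rho>\rho_0$, not $\rho<\rho_0$. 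Hence $\bg'$ with $\rho_0=0$ encloses the north pole and all finite critical points with $\rho>0$ — that is, the $n_p-m_p$ centres with $0<\rho<1/k$ \emph{and} all $n_p$ saddles with $\rho>1/k$. That gives index $n_p(-1)+(n_p-m_p)(+1)+(1+r^\alpha)=-m_p+1+r^\alpha$, and equating with $w^\alpha_p+1$ yields $w^\alpha_p=r^\alpha-m_p$ as claimed.

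The main obstacle, then, is purely the orientation/enclosure bookkeeping: correctly tracking which critical points the projected curve $\bg'=\{\rho=0\}'$ surrounds once we pass through the Mercator map and the hemisphere projection, and in particular getting the $\rho>0$ versus $\rho<0$ sides right so that the saddles above $\rho=1/k$ are counted and the $\rho<0$ centres are the ones left out. Everything else is a direct transcription of the Lemma-based argument already used for Theorems 2 and 3.
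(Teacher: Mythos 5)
Your final argument is correct and is essentially the paper's own proof, which likewise just reruns Theorem 3 with $\bg$ taken to be the circle $\rho=0$: after projection the curve encloses the north pole (index $1+r^\alpha$), all $n_p$ saddles and the $n_p-m_p$ centres with $\rho>0$, so that $w^\alpha_p+1=1+r^\alpha-m_p$. The only wrinkle is your initial miscount of which critical points $\bg'$ encloses, which you then repair correctly — the enclosed side is the $\rho>0$ region, so the saddles are counted and precisely the $m_p$ centres below $\rho=0$ are omitted.
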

\begin{proof}
As in the previous theorem, only now we let $\bg$ be the curve $\rho=0$ on the cylinder. There will be $m_p$ centres below this line, and if $\bu$ is $F(s,0)$ normalized then $deg(\bu,\bsh;\bg)=w^\alpha_p$.
\end{proof}

Looking at Figure \ref{normspic} left and right we see examples of both the previous theorems in action: the point $A$ has two normal segments through it, the red line corresponding to the critical point above the horizontal axis but below the line $\rho=1/k$, and the green line the critical point below the axis; therefore $n_A=2$ and $m_A=1$ and we see $w^\beta_A=2-2=0$ and $w^\alpha_A=2-1=1$. For the point $B$ we observe $n_B=1,m_B=0$ and $w^\beta_B=2-1=1$ and $w^\alpha_B=2-0=2$. Finally we observe $n_C=3,m_C=1$ and $w^\beta_C=2-3=-1$ and $w^\alpha_C=2-1=1$.

We can see now the advantage of viewing the normal map as a vector field: we can see at a glance the number of centres $n_p$, the number of centres below $\rho=0$ (this is $m_p$), and the index at the north pole $i_N$. From these we can calculate $r^\alpha,\ w^\alpha_p$ and $w^\beta_p$. What's more, if we look at the curve $\rho=1/k$ we may count the number of turning points of this curve, $\nu$; these correspond to vertices of $\ba$ which lead to cusps of $\bbet$. From another work of the first author \cite{TWclocconv} we can therefore find $r^\beta$ via $r^\beta=(\nu+2r^\alpha)/2$. In summary \[ \begin{pmatrix}
r^\alpha \\ r^\beta \\ w^\alpha_p \\ w^\beta_p
\end{pmatrix}=\begin{pmatrix}
1 & 0 & 0 & 0 \\ 
1 & \tfrac{1}{2} & 0 & 0 \\ 
1 & 0 & -1 & 0 \\ 
1 & 0 & 0 & -1  
\end{pmatrix}\begin{pmatrix}
i_N \\ \nu \\ m_p \\ n_p
\end{pmatrix}-\begin{pmatrix}
1 \\ 1 \\ 1 \\ 1
\end{pmatrix}, \] and all the elements of $(i_N, \nu, m_p, n_p)$ can be quickly seen by looking at the normal map as a vector field.

\section{Normal map for surfaces}

In this section we extend the dicussion to consider the normal map of surfaces in $\rre^3$. To fix ideas we at first suppose $\mS:\rS^2\to\rre^3$ is a regular, smooth, convex topological sphere, but later we consider the more general case of $\mS$ having self-intersections. If $\bsig=\bsig(u,v)$ is a coordinate patch then we fix the orientation as follows: $\bn$ is inward pointing so the principal curvatures are positive, and $u,v$ are ordered so $(\bsig_{,u},\bsig_{,v},\bn)$ is right handed; then $F(u,v,\rho)=\bsig+\rho\bn$ is the normal map as before. As $DF$ is now $3\times 3$ there are more possibilities: for a given $u,v$ there will be two values of $\rho$ where the normal map has corank 1: $\rho=1/k_1$ and $\rho=1/k_2$ where $k_1$ and $k_2$ are the principal curvatures at $u,v$ (we label $k_2\geq k_1>0$). The corank may be 2 at a point where $k_1=k_2$, known as an umbilic point . In this context the images under $F$ of the singular sets tend to be known as `focal sets' rather than evolutes (we will label them $\bbet_1$ and $\bbet_2$ respectively), and thus the surface $\mS$ will have two focal sets in $\rre^3$ which will intersect each other in the large but also at umbilic points. There is much in the literature to read: Porteous \cite{porteous} and Izumiya et al \cite{izumiya} classify the different types of focal point; Thielhelm \cite{hannes} provides in-depth visualization of these focal sets; Guti\'{e}rrez et al \cite{gutsoto} describe the neighbourhood of umbilic points and the necessity of at least two umbilic points on spheres is the Carath\'{e}odory conjecture (see Guilfoyle \cite{guilf}).  Interestingly the global picture of the focal sets is largely missing; an archetypal example to dwell on would be the focal sets associated with the triaxial ellipsoid, and even this picture is largely absent from the literature (but see Joets and Ribotta \cite{joetsrib} for a rare example). While the triaxial ellipsoid is illuminating, we alert the reader to the fact that even for convex spheres, such as the spherical harmonic surfaces described in \cite{TWspherical}, the focal sets are exceedingly complex and difficult to visualize in $\rre^3$; we think the vector field approach provides some clarity.

The normal map for $\mS$ is $\rS^2\times\rre\to\rre^3$ but vector fields in $\rre^3$ are very cluttered to visualize and so we will not provide a plot here; see a representative sketch instead in Figure \ref{3dpic}. As before there are critical points which represent normals to $\mS$ which pass through the point we have chosen to be the origin. We classify them as follows: suppose $(u^*,v^*,\rho^*)$ corresponds to a normal through the origin. Then $(u^*,v^*)$ is a stationary point of the function $D^2:\rS^2\to\rre^+:(u,v)\to\bsig.\bsig$, and a critical point of the corresponding gradient vector field on $\mS$, with Poincar\'{e} index \begin{align} \text{sgn}[(1-\rho k_1)(1-\rho k_2)]. \label{pind} \end{align} Similarly $(u^*,v^*,\rho^*)$ is a critical point of the normal map vector field and its Poincar\'{e} index is also \eqref{pind}. As $\mS$ is a sphere and $\chi(\rS^2)=2$ this means the sum of the Poincar\'{e} indices of the finite critical points of the normal map vector field is 2.

A Poincar\'{e} index of $+1$ or $-1$ in $\rre^3$ is somewhat less informative than in $\rre^2$, in that there are 8 non-equivalent cases (4 from each) to consider (rather than 5) \cite{wang}, however we can arrange the phase space as follows: there are two sheets $\rho=1/k_1$ and $\rho=1/k_2$ which touch at umbilic points. There are at least two critical points, with those above $\rho=1/k_1$ and below $\rho=1/k_2$ having Poincar\'{e} index $+1$ (maxima and minima of $D^2$) and those between the sheets having index $-1$ (saddles of $D^2$); see Figure \ref{3dpic} for the case of the triaxial ellipsoid. If we now perform a two-point compactification of $\rS^2\times\rre$ onto $\rS^3$, we again may consider the Poincar\'{e} index at the poles. Since $\chi(\rS^3)=0$ we know these indices must sum to $-2$; indeed direct calculations along the lines described after Theorem 2 shows the indices at the poles are both $-1$ (recall we are considering $\mS$ to be a convex sphere).

As we move the origin of the coordinate system (or move $\mS$ around in $\rre^3$) we may have the origin cross one of the focal sets; in the vector field this means a pair of critical points are created or annihilated via a saddle-centre bifurcation on either $\rho=1/k_1$ or $1/k_2$ depending on which focal set was crossed, but notice one critical point is a maximum or minimum of $D^2$ depending on whether we cross the $1/k_1$ or $1/k_2$ focal set respectively (the other critical point will be a saddle). Since the winding index of a focal set changes by $\pm 1$ when we cross it, and since when the point $p$ goes to infinity there will be precisely 2 normals through it, we can extend \eqref{wpbN} to $N_p=2(w^{\beta_1}_p+w^{\beta_2}_p)+2$, but this does not distinguish $w^{\beta_1}_p$ and $w^{\beta_2}_p$. Taking a hint from the comment above, instead of counting the number of normals through $p$ we count the number of normals which correspond to maxima, minima and saddles of $D^2$; let's label them $n^+_p,n^-_p$ and $n^0_p$ respectively. Then we have \[ w^{\beta_1}_p=n^+_p-1,\quad w^{\beta_2}_p=n^-_p-1, \label{b12} \] which is consistent with $n^++n^--n^0=2$.

More generally, $\mS$ may be the image of $\rS^2$ under some map that might have self-intersections - imagine the lima\c{c}on rotated about its axis of symmetry. In this context rather than the term `rotation index' the term `degree of the Gauss map' tends to be used \cite{milnor}; let's say the degree of the Gauss map of $\mS$ is $d$. The local structure at the critical points is as described previously, but now since the Euler characteristic is twice the degree of the Gauss map (see for example \cite{mikh}) the sum of the Poincar\'{e} indices of the finite critical points should be $2d$. When looking at the projection onto $\rS^3$ calculations suggest the Poincar\'{e} index at the poles is $-d$ and $-d$, which agrees with $\chi(\rS^3)=0$. Indeed we might conjecture an extension to \eqref{b12} as $w^{\beta_1}_p=n^+_p-d, w^{\beta_2}_p=n^-_p-d$. Rigorous proofs of the statements in this paragraph are more involved and might require an analogue of Lemma 1; as such we leave further consideration to future work.

\begin{figure}
\begin{center}
\includegraphics[width=\textwidth]{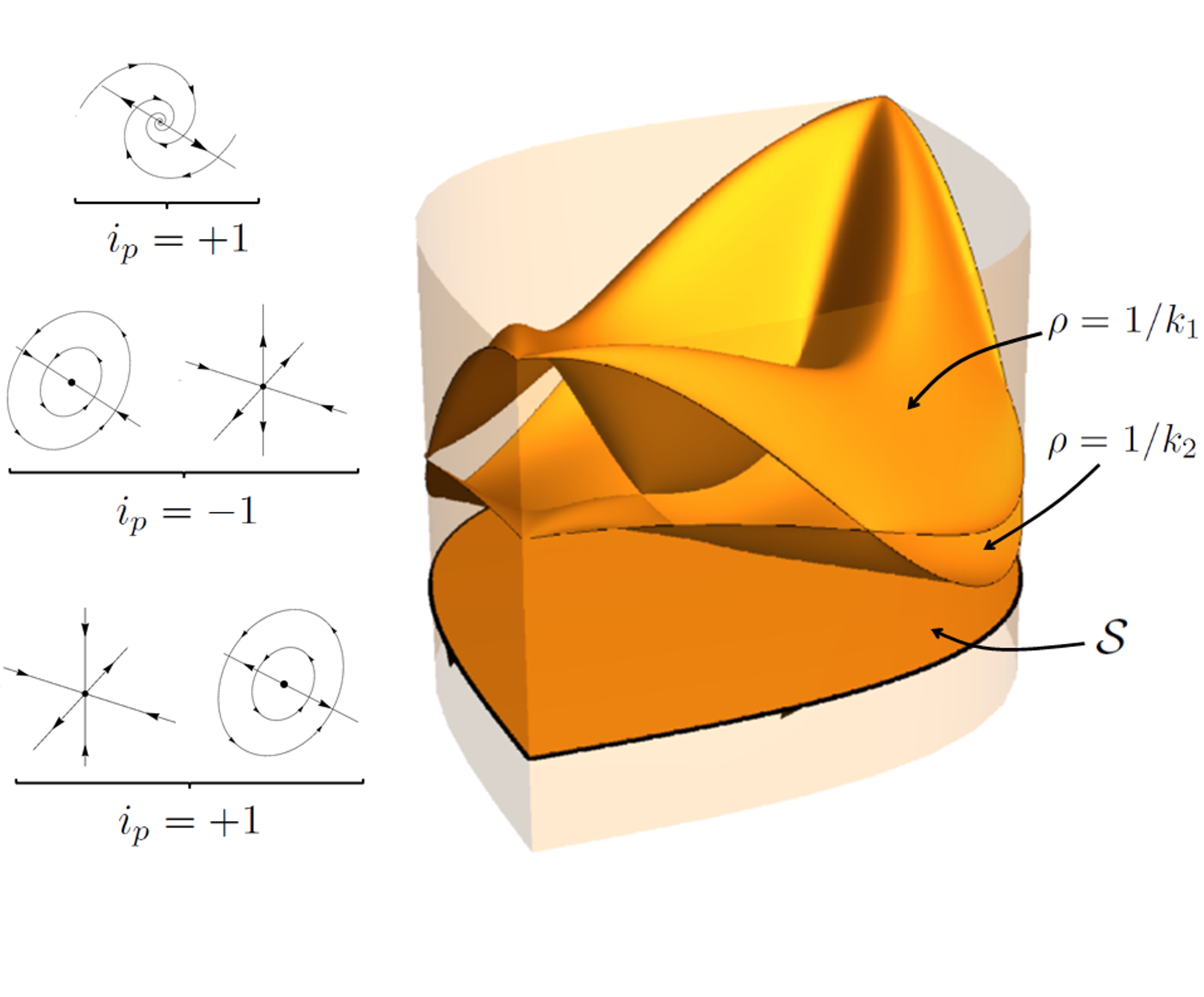}\caption{As described in the text, the two sheets $\rho=1/k_1$ and $\rho=1/k_2$ for the triaxial ellipsoid with semi-axes $(2,1.5,1)$. The surface $\mS$ has been opened into the lozenge shape by choosing two points as poles, cutting along the line joining those poles, and flattening the surface (hence the pointed end visible is one pole and the two curved adges are identified). The vector field for this triaxial ellipsiod, with origin of coordinates at the centre, would have two maximum/saddle/minimum critical points above/between/below the sheets; the linear structure for each case is shown (above $\rho=1/k_1$ both critical points have the same eigenvalues). }\label{3dpic}
\end{center}
\end{figure}

\section{Conclusions}

We have taken a novel approach in this paper by viewing the normal map as a vector field, and by studying the critical points of this vector field and making the connection with the distance function we have derived some simple counting formulae. In this paper we have only considered the case of positive curvature. If we allowed the curvature to change sign, then where $k$ passes through 0 the curve $1/k$ in Figure \ref{normspic} would diverge, and then when we map to the sphere this curve would lead to loops passing through the poles. The results from Sections 2 and 3 would need to be adapted as follows: Theorem 1 holds if we replace ``the saddles all lie above the line $\rho=1/k$ and the centres below it'' with ``the saddles lie above/below the line $\rho=1/k$ in the regions where $k$ is positive/ negative respectively, with the centres below/above it''. Theorem 2 holds with no change, however the problem with Theorem 3 is how to meaningfully define $w_p^\beta$ if $\bbet$ is unbounded and disconnected (one option would be to `complete' the evolute by adding the normal lines to $\ba$ where $k=0$, oriented by the asymptotic portions of the evolute). Theorem 4 would become \[ w_p^\alpha=r^\alpha-(m_p^+-m_p^-) \] where $m_p^+/m_p^-$ are the numbers of normal segments with $\rho<0$ through $p$ which correspond to minima/maxima of the distance function respectively.

For surfaces, even when we restrict our attention to convex surfaces, the focal sets can be very complex. An interesting question would be to consider the generalisation of the formula given at the end of Section 3, for the rotation index of the focal sets. Now instead of cusps there are ribs (\cite{porteous},\cite{izumiya}), and there may be a formula for the rotation index in terms of the number of ribs, or (in analogue with the four cusp theorem) a `three ribs' theorem. In summary, the caustic of normal lines to curves and surfaces has been much studied over the years but nonetheless still provides interesting and complex structure warranting further research.

\bibliographystyle{plain}

\bibliography{clbib}

\begin{thebibliography}{10}

\bibitem{evo1}
M.~Arnold, D.~Fuchs, I.~Izmestiev, S.~Tabachnikov, and E.~Tsukerman.
\newblock Iterating evolutes and involutes.
\newblock {\em Discrete and Computational Geometry}, 58(1):80--143, 2017.

\bibitem{arnold}
V.I. Arnold.
\newblock {\em Topological invariants of plane curves and caustics}, volume~5
  of {\em University Lecture Series}.
\newblock American Mathematical Society, 1994.

\bibitem{lovett}
T.F. Banchoff and S.T. Lovett.
\newblock {\em Differential geometry of curves and surfaces}.
\newblock CRC Press, 2010.

\bibitem{bergergostiaux}
M.~Berger and B.~Gostiaux.
\newblock {\em Differential Geometry: Manifolds, Curves, and Surfaces}, volume
  115 of {\em Graduate Texts in Mathematics}.
\newblock Springer-Verlag, 1988.

\bibitem{curvessing}
J.~W. Bruce and P.~J. Giblin.
\newblock {\em Curves and singularities}.
\newblock Cambridge University Press, 1992.

\bibitem{docarmo}
M.~P. DoCarmo.
\newblock {\em Differential Geometry of Curves and Surfaces}.
\newblock Prentice Hall, 1976.

\bibitem{firby}
P.A. Firby and C.F. Gardiner.
\newblock {\em Surface Topology}.
\newblock Woodhead Publishing, 3rd edition, 2001.

\bibitem{tabachfuchs}
D.~Fuchs and S.~Tabachnikov.
\newblock {\em Mathematical Omnibus: thirty lectures on classic Mathematics}.
\newblock American Mathematical Society, 2010.

\bibitem{guilf}
B.~Guilfoyle and W.~Klingenberg.
\newblock Reflection in a translation invariant surface.
\newblock {\em Mathematical Physics, Analysis and Geometry}, 9(3):225--231,
  2006.

\bibitem{gutsoto}
C.~Guti\'{e}rrez, J.~Sotomayor, and R.~Garcia.
\newblock Bifurcations of umbilic points and related principal cycles.
\newblock {\em Journal of Dynamics and Differential Equations}, 16(2):321--346,
  2004.

\bibitem{appo}
F.~Hartmann and R.~Jantzen.
\newblock Appolonius's ellipse and evolute revisited.
\newblock {\em Convergence}, 5, 2008.

\bibitem{Itoh1}
J.-I. Itoh and K.~Kiyohara.
\newblock The cut loci and the conjugate loci on ellipsoids.
\newblock {\em Manuscripta Mathematica}, 114(2):247--264, 2004.

\bibitem{izumiya}
S.~Izumiya, M.~del Carmen Romero~Fuster, M.~Apericida~Soares Ruas, and F.~Tari.
\newblock {\em Differential Geometry from a Singularity Theory Viewpoint}.
\newblock World Scientific, 2016.

\bibitem{jacobi}
C.~G.~J. Jacobi.
\newblock {\em Vorlesungen \"{u}ber dynamik}.
\newblock Gehalten an der Universit\"{a}t zu K\"{o}nigsberg im Wintersemester
  1842-1843 und nach einem von C. W. Borchart ausgearbeiteten hefte. hrsg. von
  A. Clebsch.

\bibitem{joetsrib}
A.~Joets and R.~Ribotta.
\newblock Caustique de la surface ellipso\"{i}dale \`{a} trois dimensions.
\newblock {\em Experimental Mathematics}, 8(1):49--55, 1999.

\bibitem{jands}
D.W. Jordan and P.~Smith.
\newblock {\em Nonlinear Ordinary Differential Equations}.
\newblock Oxford University Press, 1999.

\bibitem{wang}
Wang K., Zhou Y., Yan L., and Zou J.
\newblock Topology-based flow feature extraction on 3d vector fields.
\newblock {\em 4th IEEE Conference on Industrial Electronics and Applications},
  2009.

\bibitem{khim}
G.~Khimshiashvili, G.~Panina, and D.~Siersma.
\newblock Equilibria of point charges on convex curves.
\newblock {\em Journal of Geometry and Physics}, 98:110--117, 2015.

\bibitem{kuhnel}
W.~K\"{u}hnel.
\newblock {\em Differential Geometry: curves - surfaces - manifolds}.
\newblock American Mathematical Society, 2006.

\bibitem{mccairns}
M.~McIntyre and G.~Cairns.
\newblock A new formula for winding number.
\newblock {\em Geometriae Dedicata}, 46:149--160, 1993.

\bibitem{mikh}
G.~Mikhalkin and M.~Polyak.
\newblock Whitney formula in higher dimensions.
\newblock {\em Journal of Differential Geometry}, 44:583--594, 1996.

\bibitem{milnor}
J.W. Milnor.
\newblock {\em Topology from the differentiable viewpoint}.
\newblock The University Press of Virginia, 1981.

\bibitem{myers}
S.~B. Myers.
\newblock Connections between differential geometry and topology, {I}: simply
  connected surfaces.
\newblock {\em Duke Math. J.}, 1(3):376--391, 1935.

\bibitem{degree}
E.~Outerelo and J.M. Ruiz.
\newblock {\em Mapping degree theory}, volume 108 of {\em Graduate Studies in
  Mathematics}.
\newblock American Mathematical Society and Real Socieded Matem\'{a}tica
  Espa\~{n}ola, 2009.

\bibitem{poincare}
H.~Poincar\'{e}.
\newblock Sur les lignes geod\'{e}siques des surfaces convexes.
\newblock {\em Trans. Am. Math. Soc.}, 17:237--274, 1905.

\bibitem{porteous}
Ian~R. Porteous.
\newblock {\em Geometric Differentiation}.
\newblock Cambridge University Press, 2 edition, 2001.

\bibitem{hannes}
H.~Thielhelm.
\newblock {\em Numerische modellierung und l\"{o}sung von geod\"{a}tischen
  randwertproblemen}.
\newblock Ph.D. Thesis, Leibniz Universit\"{a}t Hannover, 2016.

\bibitem{TWclocconv}
T.~Waters.
\newblock The conjugate locus on convex surfaces.
\newblock {\em Geometriae Dedicata}, 200(1):241--254, 2019.

\bibitem{TWspherical}
T.~J. Waters.
\newblock Regular and irregular geodesics on spherical harmonic surfaces.
\newblock {\em Physica D: Nonlinear Phenomena}, 241(5):543--552, 2012.

\bibitem{whitney}
H.~Whitney.
\newblock On regular closed curves in the plane.
\newblock {\em Compositio Mathematica}, 4:276--284, 1937.

\bibitem{yoder}
J.G. Yoder.
\newblock {\em Unrolling time: Christiaan Huygens and the mathematization of
  nature}.
\newblock Cambridge University Press, 2004.

\end{thebibliography}

\end{document}